\pdfoutput=1
\documentclass{svmult}

\usepackage{type1cm}          
\usepackage{graphicx}         
\usepackage{mathtools}        
\usepackage{booktabs}         
\usepackage{newtxtext}        
\usepackage[varvw]{newtxmath} 

\usepackage[hidelinks]{hyperref}
\usepackage{cleveref}

\providecommand{\Description}[1]{}

\crefname{proposition}{Proposition}{Propositions}
\crefname{corollary}{Corollary}{Corollaries}
\crefname{remark}{Remark}{Remarks}
\crefname{definition}{Definition}{Definitions}
\Crefname{proposition}{Proposition}{Propositions}
\Crefname{corollary}{Corollary}{Corollaries}
\Crefname{remark}{Remark}{Remarks}

\begin{document}

\title*{Heilbronn's Problem in the Unit Triangle: Certified
Optimal Configurations for \texorpdfstring{$n \le 8$}{n <= 8}}
\titlerunning{Heilbronn's Problem in the Unit Triangle}
\author{Nathan Sudermann-Merx}
\authorrunning{N.\ Sudermann-Merx}
\institute{Nathan Sudermann-Merx \at Institute for Computer Science,
DHBW Mannheim, Coblitzallee 1--9, 68163 Mannheim, Germany,
\email{nathan.sudermann-merx@dhbw.de}}

\maketitle

\abstract*{We study Heilbronn's triangle problem in the unit right triangle, where $n$
points are placed to maximize the smallest of the $\binom{n}{3}$ triangle areas
they span. We prove a boundary-structure result: unless all three vertices are
occupied, some optimal configuration with $n \ge 5$ has at least four points on
the boundary, one edge carrying two of them. With the affine $S_3$ symmetry this
fixes four boundary points and $n$
orientation variables in a mixed-integer model that certifies global optimality
for all $n \le 8$: for $n = 8$ apparently the first proof, and for $n = 7$ an
independent confirmation of the symbolic-computation proof of Zeng and Chen.
For $n \le 7$ we obtain exact optima with explicit configurations. For $n = 8$
the optimum is conjectured to be the real root of a septic obtained by Chen,
Zeng and Zhou, which our reconstruction confirms to $250$ digits. We show its
Galois group is $S_7$, so on that conjecture no expression in radicals exists.}

\abstract{We study Heilbronn's triangle problem in the unit right triangle, where $n$
points are placed to maximize the smallest of the $\binom{n}{3}$ triangle areas
they span. We prove a boundary-structure result: unless all three vertices are
occupied, some optimal configuration with $n \ge 5$ has at least four points on
the boundary, one edge carrying two of them. With the affine $S_3$ symmetry this
fixes four boundary points and $n$
orientation variables in a mixed-integer model that certifies global optimality
for all $n \le 8$: for $n = 8$ apparently the first proof, and for $n = 7$ an
independent confirmation of the symbolic-computation proof of Zeng and Chen.
For $n \le 7$ we obtain exact optima with explicit configurations. For $n = 8$
the optimum is conjectured to be the real root of a septic obtained by Chen,
Zeng and Zhou, which our reconstruction confirms to $250$ digits. We show its
Galois group is $S_7$, so on that conjecture no expression in radicals exists.}

\section{Introduction}
\label{sec:intro}
Heilbronn's triangle problem asks for the placement of $n$ points inside a
convex region of unit area so as to maximize the minimum area among all
triangles formed by triples of these points. Areas transform predictably under
affine maps of the domain, but a triangle is not an affine image of a square,
so exact results for one domain do not carry over to the other.

Most exact results concern the unit square. In our companion
paper~\cite{SudermannMerx2026square} we developed a mixed-integer quadratically
constrained programming (MIQCP) framework certifying global optimality there up
to $n = 9$, together with exact coordinate reconstructions, and we refer to it
for background and a literature review.

Here we apply that framework to the \emph{unit right triangle}
$\mathcal{T} = \{(x,y) \in \mathbb{R}^2 : x \ge 0,\ y \ge 0,\ x + y \le 1\}$,
the unit simplex in $\mathbb{R}^2$, of area $\tfrac12$, and write
$\Delta_n^{\triangle}$ for the largest achievable minimum triangle area for $n$
points in~$\mathcal{T}$. Our structural result (\Cref{prop:four-on-boundary}) is
that for $n \ge 5$, unless all three vertices are occupied, \emph{some} optimal
configuration places at least \emph{four} points on $\partial\mathcal{T}$, and
that the affine $S_3$ symmetry (\Cref{cor:flush-edge}) allows the edge carrying
two of them to be fixed in advance. The model built on this certifies global
optimality for all $n \le 8$ and recovers exact optima for $n \le 7$: for
$n = 7$ an independent confirmation of the symbolic proof of Zeng and
Chen~\cite{ZengChen2019}, for $n = 8$, listed there as unsolved, to our
knowledge the first optimality proof. \Cref{sec:arithmetic} then confirms the
septic conjectured for $n=8$ by Chen, Zeng and Zhou~\cite{ChenZengZhou2014} and
shows that its Galois group is $S_7$, so that on their conjecture no expression
in radicals exists.

\section{Preliminaries}
\label{sec:prelim}

\subsection{Relation to the equilateral triangle}
\label{sec:prelim:equilateral}
The best-known configurations are recorded at Erich's Packing
Center~\cite{FriedmanPackingCenter} using an equilateral triangle of
\emph{unit area}, whereas our model uses~$\mathcal{T}$, of area~$\tfrac12$.
Triangle areas scale by the Jacobian modulus of the affine bijection between
the two domains, so passing from~$\mathcal{T}$ to their domain \emph{doubles}
the optimal value.

\subsection{Boundary structure}
\label{sec:prelim:boundary}
The boundary $\partial\mathcal{T}$ has three edges: the \emph{legs}
$e_1 = \{y = 0\}$ and $e_2 = \{x = 0\}$, and the \emph{hypotenuse}
$e_3 = \{x + y = 1\}$.

\begin{proposition}[Four points on the boundary]
\label{prop:four-on-boundary}
  Let $n \ge 5$ and let $P^\ast$ be an optimal configuration whose hull
  $K := \operatorname{conv}(P^\ast)$ differs from~$\mathcal{T}$. Then some
  area-preserving affine image of $P^\ast$ lies in~$\mathcal{T}$, is again
  optimal, and has at least four points on $\partial\mathcal{T}$, one edge
  of~$\mathcal{T}$ carrying two of them and each of the other two edges carrying
  at least one.
\end{proposition}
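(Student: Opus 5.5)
The plan is to replace $\mathcal{T}$ by a \emph{minimum-area enclosing triangle} of the hull $K$, and then transport everything back to $\mathcal{T}$ by an affine map.

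\emph{Step 1: the enclosing triangle has area exactly $\tfrac12$.} Let $T'$ be a triangle of minimum area containing $K$. It exists by compactness, since $K$ has nonempty interior: $\Delta_n^{\triangle}>0$ forbids three collinear points. Because $K\subseteq\mathcal{T}$, we have $\operatorname{area}(T')\le\tfrac12$. Let $A$ be an affine bijection with $A(T')=\mathcal{T}$. It multiplies every area by $\lambda=\tfrac12/\operatorname{area}(T')\ge 1$. The configuration $A(P^\ast)$ lies in $\mathcal{T}$ and has minimum triangle area $\lambda\Delta_n^{\triangle}$. Optimality of $P^\ast$ therefore forces $\lambda=1$. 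Hence $A$ is area-preserving and $A(P^\ast)$ is again optimal. From here on we work with $T'$ and $K$, since incidences are preserved by $A$.

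\emph{Step 2: structure of a minimal enclosing triangle.} I would invoke the classical facts for a minimal enclosing triangle of a convex polygon (Chakerian--Lange, Klamkin--Chrestenson, O'Rourke et al.):
\begin{enumerate}
\item the midpoint of every side of $T'$ lies in $K$;
\item at least one side of $T'$ contains an entire edge of $K$.
\end{enumerate}
Both follow from local perturbations. Rotating a side about its contact point changes the area to first order in proportion to the offset of that contact point from the midpoint, which gives (1). If no side were flush, all three sides could be rotated about single contact vertices simultaneously; the area is a concave-type function of the rotation parameters, so it would decrease in some direction, which gives (2). If time permits I would reprove these short lemmas rather than cite them.

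\emph{Step 3: counting points of $P^\ast$ on the boundary.} Hull vertices of $K$ are points of $P^\ast$, and $K\cap\partial T'$ consists of vertices and edges of $K$. The flush side therefore carries two points of $P^\ast$, namely the endpoints of the flush edge of $K$. By (1), each of the other two sides contains a point of $K$ in its relative interior, namely its midpoint. Thus each of these sides carries a vertex of $K$, hence a point of $P^\ast$: either the midpoint itself if it is a vertex of $K$, or an endpoint of the edge of $K$ through it.

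The remaining issue is distinctness. The only way points could be shared is at a corner of $T'$.
\begin{itemize}
\item If a corner of $T'$ is not in $K$, then near that corner the two sides meet $K$ at disjoint sets. The midpoint argument still yields contact vertices that are not that corner, and these are distinct.
\item If every corner of $T'$ lies in $K$, then $K=T'$, so $A(K)=\mathcal{T}$. This is exactly the excluded case $K=\mathcal{T}$, up to the area-preserving map.
\end{itemize}
When a contact set on a side degenerates to the shared corner, I would choose the contact vertex on the midpoint side instead. No three points of $P^\ast$ are collinear, so each side carries at most two points, and $n\ge5$ leaves enough hull vertices to separate the contacts. Altogether this gives two points on the flush side and at least one further point on each of the other two sides, at least four in total.

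\emph{Main obstacle.} I expect the main obstacle to be Step 3's distinctness bookkeeping at the corners of $T'$, together with a clean, self-contained proof of the flush-side property (2). For (2), I would first try the standard rotation argument with explicit first-order area derivatives.
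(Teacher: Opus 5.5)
Your proposal is correct and follows the paper's proof essentially step for step. Optimality forces every minimum-area enclosing triangle of $K$ to have area $\tfrac12$, so the transporting affine map preserves areas; facts (M) and (F), together with excluding the case that all three corners lie in $K$, then yield the four contacts. The distinctness bookkeeping you flag is closed most cleanly as in the paper, by splitting on the number $f$ of flush sides: a non-flush side contributes its midpoint, which lies on no other side, while two flush sides share at most a corner and at most two corners are vertices of $K$, giving $2+1+1$, $\ge 3+1$ and $\ge 6-2$ distinct points; the appeal to $n\ge5$ plays no role there.
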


\noindent
We use two facts about minimum-area triangles enclosing a convex polygon~$Q$.
\textbf{(M)} In \emph{every} minimum-area triangle enclosing~$Q$, the
\emph{midpoint} of each side lies on~$Q$~\cite{KleeLaskowski1985,ORourke1986}.
\textbf{(F)} The minimum area is attained by a triangle with one side
\emph{flush} with an edge of~$Q$, i.e.\ containing that edge, which is what
makes the edge-enumerating algorithm of~\cite{ORourke1986} correct.

Fact~(F) is an \emph{existence} statement --- $\mathcal{T}$ itself need not be
flush with~$K$ --- and this alone forces the passage to an affine image. The
distinction does not affect the model, which need only leave one optimum
intact.

\begin{proof}
  \emph{Step 1: $\mathcal{T}$ is a minimum-area triangle containing~$K$.} If some
  triangle $\mathcal{T}' \supseteq K$ had area below~$\tfrac12$, the affine
  bijection $\psi\colon \mathcal{T}' \to \mathcal{T}$ would have
  $|\det\psi| = \tfrac12 / \operatorname{area}(\mathcal{T}') > 1$, so
  $\psi(P^\ast) \subseteq \mathcal{T}$ would satisfy
  $\Delta(\psi(P^\ast)) = |\det\psi|\,\Delta(P^\ast) > \Delta(P^\ast)$,
  contradicting optimality.

  \emph{Step 2: transport to a flush triangle.} By~(F) pick a minimum-area
  triangle $\mathcal{T}''\supseteq K$ with a side flush with an edge of~$K$.
  Since $\operatorname{area}(\mathcal{T}'') = \operatorname{area}(\mathcal{T})
  = \tfrac12$ by Step~1, the affine bijection
  $\varphi\colon \mathcal{T}'' \to \mathcal{T}$ has $|\det\varphi| = 1$ and thus
  preserves every area. Hence $P' := \varphi(P^\ast) \subseteq \mathcal{T}$ is
  again optimal, $\mathcal{T}$ is a minimum-area triangle containing
  $K' := \operatorname{conv}(P') = \varphi(K)$, and one side of~$\mathcal{T}$ is
  flush with an edge of~$K'$. Also $K' \neq \mathcal{T}$: a triangular $K$ would
  be its own minimum-area enclosing triangle, forcing
  $\operatorname{area}(K) = \tfrac12$ and $K = \mathcal{T}$, which is excluded,
  so $K$ is not a triangle and $K \subsetneq \mathcal{T}''$.

  \emph{Step 3: contacts.} By~(M) applied to $Q = K'$, the midpoint of every
  side of~$\mathcal{T}$ lies on~$K'$. Each side spans a supporting line
  of~$K'$, so it meets $K'$ in a face of~$K'$, and that face contains the
  midpoint: it is either a single vertex, which is then the midpoint itself, or
  an entire edge.

  \emph{Step 4: counting.} Let $f \ge 1$ be the number of flush sides. A
  non-flush side contributes the vertex at its midpoint, which lies in the
  relative interior of that side and hence on no other side. A flush side
  contributes the two endpoints of its edge, and two flush sides share at most
  the corner of~$\mathcal{T}$ between them. Were $f = 3$ with all three corners
  vertices of~$K'$, then $K' \supseteq \mathcal{T}$, excluded, so at most two
  corners are vertices of~$K'$. Thus $f = 1$ yields $2+1+1$, $f = 2$ at least
  $3+1$, and $f = 3$ at least $6-2$ distinct vertices of~$K'$ on
  $\partial\mathcal{T}$, in every case at least four, one side carrying two. The
  vertices of $K' = \operatorname{conv}(P')$ are points of~$P'$.
\end{proof}

\begin{remark}[The remaining case]\label{rem:case-split}
  As $\mathcal{T}$ is the convex hull of its vertices, $K = \mathcal{T}$ holds
  exactly when $P^\ast$ contains all three of them, so the hypothesis of
  \Cref{prop:four-on-boundary} leaves exactly one case open, and we settle it
  separately. A model fixing the three vertices, with the other $n-3$ points
  free in~$\mathcal{T}$, certifies the optima $0.06699$, $0.05647$, $0.03479$
  and $0.02740$ for $n = 5, \ldots, 8$, all strictly below the values of
  \Cref{tab:summary}, so no optimum with $n \ge 5$ has $K = \mathcal{T}$ and
  the fixing below loses nothing. The optima carry four ($n=5$), five
  ($n=6,8$) or six ($n=7$) boundary points.
\end{remark}

\begin{corollary}[The doubly occupied edge may be fixed]
\label{cor:flush-edge}
  The six affine maps permuting the vertices of~$\mathcal{T}$ send $\mathcal{T}$
  onto itself with Jacobian modulus~$1$. Being area-preserving, and the
  objective affine-covariant, they realize $S_3$ acting on $\{e_1,e_2,e_3\}$.
  So the edge carrying two boundary points in~\Cref{prop:four-on-boundary}
  may be taken to be~$e_1$, and an optimal configuration is then labelled with
  two points on~$e_1$ ($y = 0$), one on~$e_2$ ($x = 0$), and one on~$e_3$
  ($x + y = 1$).
\end{corollary}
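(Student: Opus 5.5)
The plan is to write down the six affine maps explicitly and check the three claims in turn: each map sends $\mathcal{T}$ onto itself, each has Jacobian modulus~$1$, and together they act on the edges as the full symmetric group. For a permutation $\sigma$ of the vertices $v_1=(0,0)$, $v_2=(1,0)$, $v_3=(0,1)$, let $A_\sigma$ be the unique affine map with $A_\sigma(v_i)=v_{\sigma(i)}$. In barycentric coordinates $(1-x-y,\,x,\,y)$ this map just permutes the coordinates. It therefore maps the simplex $\mathcal{T}$ (nonnegative barycentric coordinates) bijectively onto itself.

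For the Jacobian, $A_\sigma$ maps $\mathcal{T}$ onto itself, so it preserves the area $\tfrac12$, and hence $|\det A_\sigma|=1$. One can also verify this directly on the generators: $(x,y)\mapsto(y,x)$ has $\det=-1$, and $(x,y)\mapsto(1-x-y,x)$, which sends $v_1\mapsto v_2\mapsto v_3\mapsto v_1$, has linear part $\begin{pmatrix}-1&-1\\1&0\end{pmatrix}$ with determinant~$1$.

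Each edge is the side opposite a vertex: $e_1$ is opposite $v_3$, $e_2$ is opposite $v_2$, and $e_3$ is opposite $v_1$. Hence $A_\sigma$ permutes the edges exactly as $\sigma$ permutes the vertices, and the action on $\{e_1,e_2,e_3\}$ is the full $S_3$. In particular it is transitive, so for each $e_j$ some $A_\sigma$ maps $e_j$ onto $e_1$.

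Next comes covariance. For any $P\subseteq\mathcal{T}$ and any affine map $A$, every triangle area scales by $|\det A|$, so $\Delta(A(P))=|\det A|\,\Delta(P)=\Delta(P)$. Thus $A_\sigma(P)\subseteq\mathcal{T}$ is again optimal whenever $P$ is.

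Now take the configuration produced by \Cref{prop:four-on-boundary}, with edge $e_j$ carrying two points and the other two edges carrying at least one each. Apply the $A_\sigma$ that sends $e_j$ to $e_1$. Affine bijections of $\mathcal{T}$ map $\partial\mathcal{T}$ to itself edge-by-edge, so point–edge incidences are transported. The image therefore has two points on $e_1$, and the remaining two edges $e_2$ and $e_3$ each carry at least one point. Relabelling the points then gives the stated labelling.

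The main obstacle is a point of care rather than difficulty: the four points must be distinct, and incidences must be preserved. A corner point lies on two edges, so it could be counted on both. Step~4 of the proposition's proof already guarantees four distinct points with the required edge assignment, one side carrying two. I would simply choose the two points on $e_1$ and one designated point for each of $e_2$ and $e_3$ from among those distinct vertices, so the assignment is injective.
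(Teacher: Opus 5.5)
Your proposal is correct and takes the same route as the paper, which packs the whole argument into the statement itself. The vertex-permuting affine maps are area-preserving self-maps of $\mathcal{T}$ acting as $S_3$ on the edges, so by affine covariance the doubly occupied edge can be moved to $e_1$ without losing optimality; your barycentric description, explicit generators, and check via Step~4 of the proposition's proof that the four points admit an injective edge assignment when a corner is occupied make explicit what the paper leaves implicit.
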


\section{The Optimization Model}
\label{sec:model}
Write $p_i = (x_i,y_i)$ for $i \in I := \{0,\dots,n-1\}$, let $T$ be the set of
triples, and let $A_{ijk} = \tfrac12(x_i(y_j-y_k) + x_j(y_k-y_i) + x_k(y_i-y_j))$
be the \emph{signed} area of $p_ip_jp_k$, whose area is $|A_{ijk}|$. Maximizing
the smallest of these over~$\mathcal{T}$ reads
\begin{equation}\label{eq:model}
  P_\triangle^\star:\quad \max\ z \quad\text{s.t.}\quad
  z \le (2b_t-1)\,A_t\ \ (t \in T), \qquad
  x_i,\, y_i \ge 0,\quad x_i + y_i \le 1\ \ (i \in I),
\end{equation}
with $z \ge 0$ and $b_t \in \{0,1\}$. The binary $b_t$ selects the orientation,
giving $z \le A_t$ for $b_t = 1$ and $z \le -A_t$ otherwise, so $z \le |A_t|$
throughout. As $A_t$ is bilinear, \eqref{eq:model} is a mixed-integer
quadratically constrained program, solved to proven global optimality by
Gurobi~13, the products being handled by the substitution $w_{ij} = x_i y_j$.

The main gain comes from the boundary structure. For $n \ge 5$,
\Cref{cor:flush-edge} lets us put
$p_0, p_1$ on $e_1$, $p_2$ on $e_3$ and $p_3$ on $e_2$, labelled
counterclockwise, removing four continuous variables, and it breaks the residual
reflection $(x,y)\mapsto(1-x-y,y)$ by $x_0 + x_1 \le 1$. In that labelling the
four boundary points are in convex position, so every triangle among them, and
every one formed by $p_0, p_1$ and a free point above $e_1$, is positively
oriented. These $\binom{4}{3} + (n-4) = n$ triples make up the set
$T^+$, as in the square model~\cite{SudermannMerx2026square}.

\section{Certified Optimal Configurations}
\label{sec:results}
\Cref{tab:summary} reports the certified optima, runtimes and exact
coordinates (numerical for $n=8$), and \Cref{fig:configs} shows the
configurations. All runs used Gurobi~13 with a MIP gap tolerance of $10^{-4}$.

\begin{table}[t]
  \caption{Certified optima, runtimes, and exact representative configurations.
  For $n=5,6$ the optimum is a family and one representative is shown. For
  $n=8$ the value and the coordinates are the certified numerical solution.}
  \Description{A table with one row per $n$ from 3 to 8 and four columns: $n$,
  the certified optimum $\Delta_n^\triangle$, the solver runtime in seconds, and
  the coordinates of a representative optimal configuration. The optima are
  $\tfrac12$, $\tfrac16$, $\tfrac32-\sqrt2$, $\tfrac1{16}$, $\tfrac{7}{144}$ and
  approximately $0.033896$, with runtimes below one second, below one second,
  $0.2$, $3.6$, $22.6$ and $2324$ seconds.}
  \label{tab:summary}
  \centering\footnotesize
  \setlength{\tabcolsep}{4pt}
  \begin{tabular}{c l r l}
  \toprule
  $n$ & $\Delta_n^{\triangle}$ & Time (s) & optimal points $(x_i, y_i)$ \\
  \midrule
  $3$ & $\tfrac12$ & $<1$ & $(0,0),\ (1,0),\ (0,1)$ \\
  $4$ & $\tfrac16$ & $<1$ & $(0,0),\ (1,0),\ (0,1),\ (\tfrac13,\tfrac13)$ \\
  $5$ & $\tfrac32-\sqrt2$ & $0.2$ & $(0,0),\ (2{-}\sqrt2,0),\ (0,1),\ (2{-}\sqrt2,\sqrt2{-}1),\ (3{-}2\sqrt2,\sqrt2{-}1)$ \\
  $6$ & $\tfrac1{16}$ & $3.6$ & $(\tfrac14,0),\ (\tfrac34,0),\ (0,\tfrac38),\ (\tfrac34,\tfrac14),\ (0,1),\ (\tfrac14,\tfrac12)$ \\
  $7$ & $\tfrac{7}{144}$ & $22.6$ & $(\tfrac16,0),\ (\tfrac34,0),\ (0,\tfrac14),\ (\tfrac56,\tfrac16),\ (0,\tfrac56),\ (\tfrac14,\tfrac34),\ (\tfrac13,\tfrac13)$ \\
  $8$ & $\approx 0.033896$ & $2324$ & $(0,0),\ (0.889,0),\ (0.721,0.279),\ (0,0.845),$ \\
      &                    &        & $\quad(0.112,0.283),\ (0.112,0.888),\ (0.395,0.393),\ (0.439,0.076)$ \\
  \bottomrule
  \end{tabular}
\end{table}

\begin{figure}[t]
  \centering
  \includegraphics[width=0.50\linewidth]{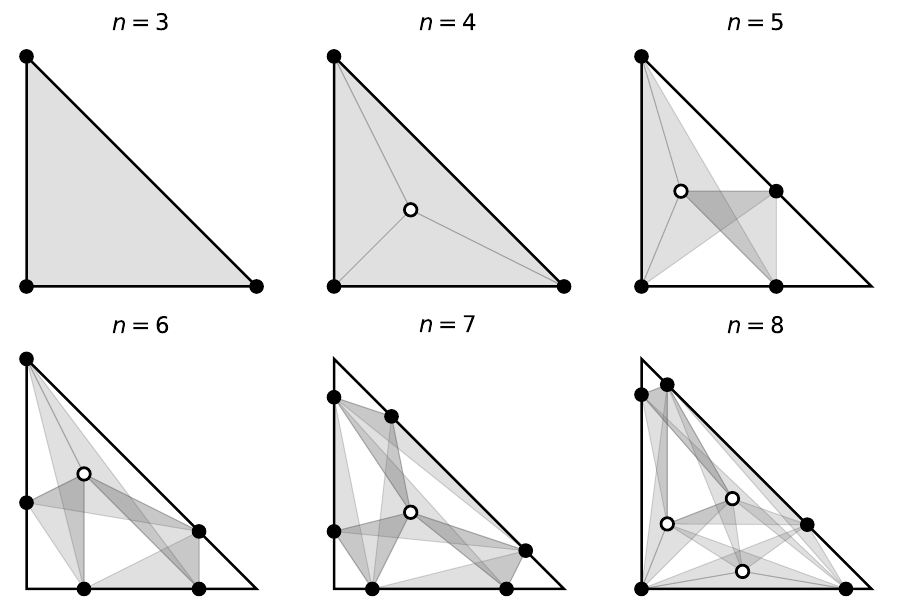}
  \caption{Certified optima for $3 \le n \le 8$. Filled dots lie on
  $\partial\mathcal{T}$, open dots inside, shaded triangles attain the minimum
  area. For $n=5,6$ one representative is shown.}
  \Description{Six panels, one per $n$ from 3 to 8, each drawing the unit right
  triangle with the $n$ points of a certified optimal configuration. Points on
  the boundary are filled dots, points in the interior are open dots, and the
  triangles attaining the minimum area are shaded gray, overlapping shadings
  appearing darker. The counts of boundary points are three for $n=3$ and
  $n=4$, four for $n=5$, five for $n=6$, six for $n=7$ and five for $n=8$, so
  every configuration with $n\ge5$ has at least four, as
  \Cref{prop:four-on-boundary} predicts. The number of interior points is zero
  for $n=3$, one for $n=4,5,6,7$ and three for $n=8$.}
  \label{fig:configs}
\end{figure}

For $n \le 7$ the optimum is attained in \emph{closed form}: the configuration
listed in \Cref{tab:summary} is feasible and meets the stated value exactly.
The cases $n=3,4$ are classical. For $n=5$ the
optimum $\tfrac32-\sqrt2$ is \emph{not unique}: a one-parameter family arises
by sliding the second $e_1$-point $(a,0)$ over $a\in[\,2-\sqrt2,\ \sqrt2/2\,]$,
and we list the endpoint $a=2-\sqrt2$. For $n=6$ the optimum $\tfrac1{16}$ is
likewise a two-parameter family (a rational representative is shown), while for
$n=7$ it is $\tfrac{7}{144}$, on the $\tfrac1{12}$-grid. For $n=8$ no closed
form is known, and \Cref{sec:arithmetic} explains what stands in the way.

Our values agree with the configurations recorded at Erich's Packing
Center~\cite{FriedmanPackingCenter} (equilateral triangle, unit area): doubled,
they give $3-2\sqrt2$, $\tfrac18$, $\tfrac{7}{72}$ and $0.06779\ldots$ for
$n=5,\dots,8$. Peng proved optimality for $n=5$ in
1989~\cite[Sect.~9.3]{Soifer2009}, Yang, Zhang and Zeng for
$n=5,6$~\cite{YangZhangZeng1994}. The configurations for $n=7,8$ are
Cantrell's. For $n=7$, after the grid-search upper bound $23/200$
of~\cite{DeComiteDelahaye2009}, Zeng and Chen~\cite{ZengChen2019} proved
optimality of $7/72$, with uniqueness, by an eight-case symbolic analysis built
on that localization; our certificate reaches the same configuration
independently of both. For $n=8$ only bounds were known: a branch-and-bound
over octuples of subtriangles reached $0.067816$, within $2.7\cdot10^{-5}$ of
Cantrell's $0.06778921\ldots$, at some $11{,}000$ CPU hours on an $80$-core
cluster~\cite{ChenZengZhou2014}, and the case remained open
in~\cite{ZengChen2019}. Model~\eqref{eq:model} settles it, and re-proves
$n=7$, in $2324$ and $22.6$ seconds on one machine.

\subsection{The case \texorpdfstring{$n=8$}{n=8}}
\label{sec:arithmetic}
For $n \le 7$ the optimal coordinates are quadratic at worst, rational except
when $n = 5$. For $n = 8$ the ground has been prepared by Chen, Zeng and
Zhou~\cite{ChenZengZhou2014}: they proved that at most eleven triangles of an
optimal configuration can be minimal, namely a specific set $T_{11}$ of index
triples, and, assuming all eleven to be minimal, eliminated the ten free
coordinates from the resulting equal-area system \emph{exactly} by a Gr\"obner
basis. This gives a septic whose unique real root they conjecture to be the
optimum. Rescaled to our normalization,
\begin{align*}
  P(t) ={}& 1216512\,t^7 + 1022976\,t^6 - 2745456\,t^5 + 2218730\,t^4 \\
          & {}+ 32206304\,t^3 - 19148615\,t^2 + 3156103\,t - 86233 .
\end{align*}
Our certificate supports the conjecture from the other side: solving the same
system to several hundred digits, seeded at the certified optimum, reproduces
the real root of~$P$ to over $250$ digits, and our critical triangles are
exactly~$T_{11}$. Their localization also feeds back into the model, confining
each point to a box of side $\tfrac1{32}$. That makes all $\binom83 = 56$
orientations constant over the feasible set, so every binary can be fixed in
advance and $P_\triangle^\star$ becomes a continuous program, certifying $n=8$
in $0.04$ seconds rather than $2324$ at the price of inheriting their
Theorem~2, which the run of \Cref{tab:summary} does not.

The arithmetic consequence does not seem to have been noted. Now $P$ is
primitive and irreducible over~$\mathbb{Q}$, already modulo~$23$, so its Galois
group $G$ is a transitive subgroup of~$S_7$. By Dedekind's
theorem~\cite{Cohen1993} the degrees of the irreducible factors of $P$ modulo an
unramified prime form the cycle type of a Frobenius element of~$G$. Modulo~$7$
these are $(1,1,5)$, giving an element of order~$5$, so $5 \mid |G|$, and
modulo~$5$ they are $(1,6)$, an odd permutation, so
$G \not\subseteq A_7$. Of the transitive subgroups of $S_7$, whose orders are
$7,14,21,42,168,2520,5040$, only $|A_7|$ and $|S_7|$ are divisible by~$5$, hence
$G = S_7$. As $S_7$ is not solvable, no root of $P$ lies in a radical extension
of~$\mathbb{Q}$~\cite{Stewart2015}. So if the conjecture
of~\cite{ChenZengZhou2014} holds, then $\Delta_8^\triangle$ and the optimal
coordinates admit \emph{no} expression in radicals --- in contrast to the unit
square, whose optima for $n=8,9$ are
quadratic~\cite{SudermannMerx2026square}.

\section{Conclusion and Outlook}
\label{sec:conclusion}
We have certified global optimality for Heilbronn's triangle problem in the
unit right triangle for all $n \le 8$, apparently the first proof for $n = 8$
and an independent one for $n = 7$, and determined the optimum exactly for
$n \le 7$. The key
ingredient is the boundary structure of \Cref{prop:four-on-boundary}:
some optimum carries four points on $\partial\mathcal{T}$, and the affine $S_3$
symmetry lets us fix those four points and $n$ orientation variables in a
single self-contained MIQCP.

The optima are varied. Those for $n = 5, 6$ form continuous families, the
optimum $\tfrac{7}{144}$ for $n = 7$ lies on a rational grid, and
$\Delta_8^\triangle$ is conjecturally a degree-$7$ algebraic number whose Galois
group we show to be $S_7$, ruling out any expression in radicals
(\Cref{sec:arithmetic}). What remains open for $n=8$ is narrow but real: one
must show that all eleven triangles of $T_{11}$ are simultaneously minimal, of
which~\cite{ChenZengZhou2014} proves the upper half. Beyond that, certificates
for $n \ge 9$ and a characterization of the optimal families are the natural
next steps.


\end{document}